\documentclass[letterpaper, 10 pt, conference]{ieeeconf}  

\IEEEoverridecommandlockouts                              
\usepackage{cite}
\usepackage{amsmath,amssymb,amsfonts}
\usepackage{algorithmic}
\usepackage{graphicx}
\usepackage{textcomp}
\usepackage{stfloats} 
\usepackage{xcolor}
   
\newtheorem{theorem}{Theorem}

\newtheorem{remark}{Remark}

\newtheorem{lemma}{Lemma}
\newtheorem{assumption}{Assumption}

\title{\LARGE \bf
Lie bracket approximations of RMSprop for Extremum Seeking Control
}

\author{Victoria Grushkovskaya$^{1,3}$ and Christian Ebenbauer$^{2}$
\thanks{$^{1}$Department of Mathematics,
        University of Klagenfurt, 9020 Klagenfurt am W\"orthersee, Austria
        {\tt\small viktoriia.grushkovska@aau.at}}%
\thanks{$^{2}$Chair of Intelligent Control Systems, RWTH Aachen University, 52074 Aachen,
Germany        
        {\tt\small christian.ebenbauer@ic.rwth-aachen.de}}%
\thanks{$^{3}$Institute of Applied Mathematics \& Mechanics, NAS of Ukraine}
}

\begin{document}

\maketitle
\thispagestyle{empty}
\pagestyle{empty}

\begin{abstract}
This paper presents novel extremum seeking algorithms inspired by RMSProp dynamics and based on Lie bracket approximation techniques. To inherit the behavior of the classical continuous-time RMSProp algorithm, we introduce an extremum seeking scheme that exploits second-order Lie brackets, allowing the excitation of squared gradient terms in the resulting Lie bracket system.
Moreover, we propose two modified RMSProp-type algorithms that can be implemented using only first-order Lie bracket approximations, thereby reducing the complexity of the extremum seeking control design. The three resulting extremum seeking algorithms are compared through numerical simulations.
\end{abstract}

\section{INTRODUCTION}


RMSProp \cite{tieleman_hinton2012lecture} is a gradient-based optimization method with an adaptive step-size rule. The adaptation of the step size  is guided by the rule that a small absolute value of the (moving-averaged) gradient component requires a large step size component and vise versa. This rule is realized by filtering (moving-average) the squared components of the gradient and by a nonlinear scaling of the gradient. RMSProp is closely related to Adam \cite{kingma2014adam}; in fact, Adam can be interpreted as a momentum-based (stochastic) variant of RMSProp and is one of the main learning algorithms for deep neural networks. The RMSProp step size rule was primarily developed to improve the performance of gradient descent methods, specifically to address vanishing and exploding gradient problems in the training of deep neural networks. Moreover, RMSProp exhibits several desirable stability and robustness properties, see e.g. \cite{dimitrieski2026global}.

These properties make RMSProp an interesting candidate for extremum seeking control (ESC) algorithms as well. In their simplest form, ESC algorithms are gradient-free optimization algorithms that often rely on gradient-based algorithms and appropriate gradient approximation
schemes.   A first combination of ESC with RMSProp was presented in \cite{McN-Ahm-24}.
Traditionally, ESC algorithms are formulated in a continuous-time setting (see, e.g.,~\cite{Kr00,DurrAuto,Guay15,Nes09,Tan10,Sch24,yilmaz2025exponential,pokhrel2026higher}), which is also the framework adopted in this work. Hence, continuous-time RMSProp algorithms are of interest in this work and can be found, for example, in 
\cite{ma2022qualitative,bensaid_Lyapunov2024convergence,heredia2024modeling}. 
One of the main challenges of using RMSProp in ESC is that the gradient of the objective function enters in a nonlinear  fashion into the RMSProp vector field (iteration rule). As a consequence, ESC synthesis frameworks such as Lie bracket-based gradient approximation techniques \cite{DurrAuto} do not apply directly. 

The goal of this work is to present several alternative approaches to \cite{McN-Ahm-24} to synthesize RMSProp-based ESC algorithms  using Lie bracket approximation techniques, which, to the best of our knowledge, have not previously been applied to RMSProp-type dynamics.

The main contributions are as follows. First, we present an RMSProp-based ESC algorithm in which both the continuous-time gradient-based RMSProp vector field 
and the squared components of the gradient are approximated using Lie bracket approximation techniques.  In contrast to \cite{McN-Ahm-24}, our construction employs second-order Lie brackets to generate the squared gradient terms required by the classical RMSProp dynamics.
Second, we propose two novel continuous-time RMSProp algorithms that are particularly suited for first-order Lie bracket approximation techniques and thus are easier to implement. The first algorithm is based on a state extension, where the additional state corresponds to the low-pass filtered gradient. The second algorithm is based on a modified step-size rule and is, in its own right, an interesting alternative to the standard RMSProp \cite{tieleman_hinton2012lecture}. In particular, whereas in the standard RMSProp the gradient components are first squared and then filtered, the idea is to first filter the gradient components and then square them. Both modifications have the desired property that the gradient enters linearly in the corresponding vector fields, and thus standard first order Lie bracket approximation techniques apply.
Finally, simulations of the proposed methods are provided.

The remainder of this paper is organized as follows. 
In Section~\ref{sec_RMSProp}, we present a continuous-time formulation of RMSProp, analyze its stability properties, and introduce two modified versions of the algorithm. Section~\ref{sec_ES} is devoted to the development of three extremum seeking systems that inherit the qualitative properties of the RMSProp algorithms. The results are illustrated through numerical simulations in Section~\ref{sec_exmpls} and summarized in Section~\ref{sec_con}.


\emph{Notations:} $B_\delta(x^*)$,$\overline{B_\delta(x^*)}$ -- $\delta$-neighborhood of $x^*{\in} \mathbb R^n$ and its closure;\\
$C^\ell(D;\mathbb R)$ -- the set of all functions $h: D \to \mathbb{R}$ that are $\ell$-times continuously differentiable on $D$;\\
 for  $h{\in} C^1(\mathbb R^n;\mathbb R)$,  $\nabla h(x):=(\nabla_1 h(x),\dots,\nabla_n h(x))^\top$ with $\nabla_ih(x):=\frac{\partial h(x)}{\partial x_i}$;\\
 for $x\in\mathbb R$, $h'(x)=\frac{dh(x)}{dx}$;\\
for  $f,g \in C^1(\mathbb R^n;\mathbb R^n)$, 
the Lie bracket is 
 $[f,g](x)=\frac{\partial g(x)}{\partial x}f(x)-\frac{\partial f(x)}{\partial x}g(x)$.

 \section{RMSProp-like algorithms}~\label{sec_RMSProp}

\vspace{-0.5em}

In this section, we consider three continuous-time versions of RMSProp dynamics that will serve as reference systems for extremum seeking algorithms. In Section~\ref{sec_ES}, we introduce the corresponding extremum seeking systems and discuss the advantages and limitations of each approach.


A continuous-time version of classical RMSProp dynamics has been introduced in~\cite{ma2022qualitative}:
\begin{equation}\label{RMSProp}
\begin{aligned}
\dot x_i&=-\alpha_i\frac{\nabla_{i}J(x)}{\sqrt{v_i+\epsilon}},\\
\dot v_i&=-\beta_i \bigl(v_i-\nabla_{i}J(x)^2\bigr),
\end{aligned}
\end{equation}
where $\epsilon,\alpha_i ,\beta_i >0$, $i=1,\dots,n$.

In addition to the classical approach, we propose two modifications that are more convenient for extremum seeking implementation. The first one is obtained by introducing additional variables $\eta_i$ that provide  estimates of $\nabla_{i}J(x)$:
\begin{equation}
\label{RMSProp_mod}
\begin{aligned}
&\dot x_i=-\alpha_i \frac{\nabla_{i}J(x)}{\sqrt{v_i+\epsilon}},\\
&\dot v_i=-\beta_i (v_i-\eta_i^2),\\
&\dot \eta_i=-\gamma_i(\eta_i-\nabla_{i}J(x)),
\end{aligned}
\end{equation}
with positive constants $\epsilon,\alpha_i ,\beta_i ,\gamma_i$, $i=1,\dots,n$. 

The second modification is given by
\begin{equation}
\label{RMSProp_mod2}
\begin{aligned}
&\dot x_i=-\alpha_i \frac{\nabla_{i}J(x)}{\sqrt{v_i^2+\epsilon}},\\
&\dot v_i=-\beta_i (v_i-\nabla_{i}J(x)).
\end{aligned}
\end{equation}
We will show that the trajectories of system~\eqref{RMSProp} admit a second-order Lie bracket approximation. In contrast, for systems~\eqref{RMSProp_mod} and~\eqref{RMSProp_mod2}, a first-order Lie bracket approximation is sufficient. The reason for introducing system~\eqref{RMSProp_mod2} is to avoid increasing the system dimension.
We will discuss this in more detail in Section~III, while the rest of this section focuses on the stability properties.

\begin{theorem}\label{thm_stab}
\textit{Let $J\in C^1(\mathcal D;\mathbb R)$, $\mathcal D\subseteq \mathbb R^n$ be a domain, and let  there exist an $x^*\in \mathcal D$ such that:
\begin{enumerate}
\item[i)] $J(x)>J(x^*)$ for all $x\in \mathcal D\setminus\{x^*\}$; 
\item[ii)] $\nabla J(x)=0$ if and only if $x=x^*$.
\end{enumerate}
Then the following statements hold:
\begin{enumerate}
    \item the equilibrium  $q^*=({x^*}^\top,0^\top)^\top$ is  locally asymptotically stable  for system \eqref{RMSProp} w.r.t. the set $\mathcal  S=\{(x,v)\in\mathbb R^n\times\mathbb R^n:\ v_i\ge 0\}$;
    \item the equilibrium  $q^*=({x^*}^\top,0^\top,0^\top)^\top$ is  locally asymptotically stable  for system \eqref{RMSProp_mod} w.r.t. the set $\mathcal  S=\{(x,v,\eta)\in\mathbb R^n\times\mathbb R^n\times\mathbb R^n:\ v_i\ge 0\}$;
     \item the equilibrium  $q^*=({x^*}^\top,0^\top)^\top$ is  locally asymptotically stable  for system \eqref{RMSProp_mod2}.
\end{enumerate}
}
\end{theorem}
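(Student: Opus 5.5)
We would prove all three statements with one scheme: a Lyapunov function $V(q)=J(x)-J(x^*)+W(\cdot)$, where $J(x)-J(x^*)$ handles the $x$-subsystem and a quadratic term handles the auxiliary states. The derivative is shown to be nonpositive, and LaSalle's invariance principle (restricted to $\mathcal S$ where needed) then gives attractivity. Since $J$ is only $C^1$, the vector fields are merely continuous, so we do not linearize. Instead we work on a compact sublevel set contained in $\mathcal D$. By i) and continuity, $J(x)-J(x^*)$ is positive definite about $x^*$, and small sublevel sets $\{x: J(x)-J(x^*)\le c\}\cap \overline{B_\delta(x^*)}$ are compact, contained in $\mathcal D$, and have interiors shrinking to $x^*$.

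For system \eqref{RMSProp}, first note that $\mathcal S$ is forward invariant: on $v_i=0$ we have $\dot v_i=\beta_i\nabla_iJ(x)^2\ge0$. Take
\[
V(x,v)=J(x)-J(x^*)+\tfrac12\sum_i v_i^2 .
\]
Then
\[
\dot V=-\sum_i\alpha_i\frac{\nabla_iJ(x)^2}{\sqrt{v_i+\epsilon}}-\sum_i\beta_i v_i^2+\sum_i\beta_i v_i\nabla_iJ(x)^2 .
\]
The cross term is the obstacle. On a neighborhood where $v_i\le \bar v$ and $\beta_i\bar v\le \alpha_i/\sqrt{\bar v+\epsilon}$, the first sum dominates it, because $v_i\nabla_iJ^2$ has the same factor $\nabla_iJ^2$. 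Hence $\dot V\le -c\sum_i(\nabla_iJ(x)^2+v_i^2)$ locally on $\mathcal S$. If the sublevel-set argument is delicate, we will instead use the scaled term $\kappa\sum_i v_i^2$ with $\kappa$ small enough to absorb the cross term. Stability follows from positive definiteness of $V$ on a sublevel set inside this neighborhood. Attractivity follows from LaSalle: $\dot V=0$ forces $v=0$ and $\nabla J(x)=0$, hence $x=x^*$ by ii.

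For system \eqref{RMSProp_mod}, $\mathcal S$ is again invariant. We use
\[
V=J(x)-J(x^*)+\tfrac{a}{2}\sum_i\eta_i^2+\tfrac{b}{2}\sum_i v_i^2 .
\]
Its derivative contains the cross terms $a\gamma_i\eta_i\nabla_iJ$ and $b\beta_iv_i\eta_i^2$. We bound $a\gamma_i\eta_i\nabla_iJ$ by Young's inequality against $-\alpha_i\nabla_iJ^2/\sqrt{v_i+\epsilon}$ (whose coefficient is bounded below locally) and against $-a\gamma_i\eta_i^2$; taking $a$ small makes this work. The term $b\beta_iv_i\eta_i^2$ is dominated by $-a\gamma_i\eta_i^2$ for small $v_i$. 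Alternatively, the $(x,\eta)$ part is a cascade independent of $v$ except through the positive gain, and the $v$-subsystem is an ISS filter driven by $\eta^2$. The LaSalle step is the same as before.

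For system \eqref{RMSProp_mod2} there is no positivity constraint, and the gain $1/\sqrt{v_i^2+\epsilon}$ lies between positive constants on bounded sets. With
\[
V=J(x)-J(x^*)+\tfrac{a}{2}\sum_i v_i^2
\]
we get
\[
\dot V=-\sum_i\alpha_i\frac{\nabla_iJ^2}{\sqrt{v_i^2+\epsilon}}-a\sum_i\beta_iv_i^2+a\sum_i\beta_iv_i\nabla_iJ .
\]
Young's inequality with $a$ small gives $\dot V\le -c(|\nabla J|^2+|v|^2)$ locally, and we conclude as before. The main difficulty throughout is controlling the cross terms while only $C^1$ regularity is available. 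This is handled by choosing the weights, and we expect it to be hardest for \eqref{RMSProp}, where the cross term is cubic-like and must be absorbed using the restriction to a small neighborhood.
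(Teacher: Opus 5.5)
Your proposal is correct, but it takes a different route from the one the paper indicates. The paper gives no formal proof, only a cascade sketch. Along \eqref{RMSProp}--\eqref{RMSProp_mod2} one has $\frac{d}{dt}J(x)=-\sum_i\alpha_i\nabla_iJ(x)^2/\sqrt{v_i+\epsilon}\le 0$ (resp.\ with $\sqrt{v_i^2+\epsilon}$) for \emph{every} admissible $v$. Hence $x$ stays in a sublevel set of $J$ and converges to the unique critical point $x^*$. The auxiliary states are exponentially stable linear filters driven by $\nabla_iJ^2$, $\nabla_iJ$ or $\eta_i^2$. These inputs are small on small sublevel sets and vanish in the limit, so $v\to0$ (and $\eta\to0$).

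You instead build one strict Lyapunov function and absorb the cross terms $\beta_iv_i\nabla_iJ^2$, $a\gamma_i\eta_i\nabla_iJ$, $b\beta_iv_i\eta_i^2$ and $a\beta_iv_i\nabla_iJ$ by weighting and Young's inequality. Your derivative computations are right, and the bounds $\dot V\le-c(\|\nabla J\|^2+|v|^2+\dots)$ hold on the neighborhoods you describe. In \eqref{RMSProp}, take $\beta_i\bar v$ strictly below $\alpha_i/\sqrt{\bar v+\epsilon}$ so that $c>0$.

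Your route gives stability and attractivity in one step, with a quantitative decay estimate. Since that bound is negative definite by ii), you do not need LaSalle at all. This is preferable: with $J\in C^1$ only, the vector fields are merely continuous, solutions may be non-unique, and the classical invariance principle assumes uniqueness. For the same reason, justify invariance of $\mathcal S$ by $v_i(t)=e^{-\beta_it}v_i(0)+\beta_i\int_0^te^{-\beta_i(t-s)}\nabla_iJ(x(s))^2\,ds\ge0$, which holds for every solution, rather than by the sign of $\dot v_i$ on $\{v_i=0\}$.

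The cascade route, in turn, has no cross terms to fight and no smallness requirement on $v(0)$. It certifies attraction from any $v(0)\in\mathcal S$ with $x(0)$ in a compact sublevel set of $J$. Your absorption of $\beta_iv_i\nabla_iJ^2$ in \eqref{RMSProp}, by contrast, only works while $v_i$ stays in a range fixed in advance. The price of the cascade route is that $J$ alone is not a Lyapunov function for the full state. It therefore needs an extra integral/Barbalat step for attractivity of $x$, using boundedness of $v$ to keep the gain bounded below, and a filter bound for stability of the full state.
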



Due to space limitation, we do not include a formal proof.
However, for example, it is easy to see that in equation \eqref{RMSProp} the $x$-component
converges to a point where the gradient vanishes. Consequently, the $v$-component converges to zero. Such a cascade-like argument also applies to equation
\eqref{RMSProp_mod} and \eqref{RMSProp_mod2}.

\begin{remark}
To obtain global asymptotic stability, additional assumptions ensuring boundedness of trajectories are required. Typical sufficient conditions are radial unboundedness of 
$J$ or the global monotonicity condition $
\nabla J(x)^\top(x-x^*)\ge 0.
$
\end{remark}

\section{RMSProp-like ESC Algorithms}\label{sec_ES}
In this section, we introduce the extremum seeking algorithms whose trajectory approximate the trajectories of systems~\eqref{RMSProp}, \eqref{RMSProp_mod} and~\eqref{RMSProp_mod2}. According to the classical extremum seeking setting, we assume that only values of the cost function $J$ are available for control design, and use Lie bracket approximation techniques to approximate the gradient terms. %
{The results of this section establish practical asymptotic stability of the proposed extremum seeking systems, meaning that their trajectories converge to a neighborhood of $q^*$ whose radius   can be made arbitrarily small by increasing the excitation frequency (see, e.g.,~\cite{DurrAuto,GZE18}, for  definitions).}

\subsection{Extremum seeking system corresponding to~\eqref{RMSProp}}

\paragraph{Main idea}
To explain the main idea, we assume for the moment that $x\in\mathbb R$, i.e. system~\eqref{RMSProp} has the form
\begin{equation}\label{RMSProp_1D}
\begin{aligned}
\dot x&=-\alpha\frac{J'(x)}{\sqrt{v+\epsilon}},\\
\dot v&=-\beta \bigl(v-J'(x)^2\bigr).
\end{aligned}
\end{equation}
Since the gradient $J'(x)$ appears in both equations, we need introduce two extremum seeking loops. 
The $x$-equation can be approximated using standard Lie bracket approach~\cite{DurrAuto,GZE18} by noticing that 
{ \begin{equation}
    \label{bracket1}
    \left[
\begin{pmatrix}
     \phi_1\left(\frac{ J(x)}{\sqrt{v+\epsilon}}\right)\\
  0 
\end{pmatrix}{,} 
\begin{pmatrix}
  \phi_2\left(\frac{ J(x)}{\sqrt{v+\epsilon}}\right)\\
  0
\end{pmatrix}
\right]
{=}
\begin{pmatrix}
  {-}\frac{ J'(x)}{\sqrt{v+\epsilon}}\\
  0
\end{pmatrix},
\end{equation}
provided that $
[\phi_1,\phi_2]\equiv-1
$.}
However,
the presence of the quadratic term  $J'^2(x)$ in the second equation complicates the application of Lie brackets approximation technique. To address this challenge, we observe that
$$
\left[
\begin{pmatrix}
  J'\\
  0
\end{pmatrix}, 
\begin{pmatrix}
  0\\
  J
\end{pmatrix}
\right](x)
=
\begin{pmatrix}
  0\\
  J'(x)^2
\end{pmatrix}.
$$
Thus, for any functions $\psi_1,\psi_2$ such that $[\psi_1,\psi_2]\equiv1$,
\begin{equation}
  \left[
\left[
\begin{pmatrix}
   \psi_1\circ  J \\
  0
\end{pmatrix},
\begin{pmatrix}
  \psi_2\circ J \\
 0
\end{pmatrix}
\right],
\begin{pmatrix}
 0\\
  J 
\end{pmatrix}
\right](x)=
\begin{pmatrix}
  0\\
  J'(x)^2
\end{pmatrix}
.\label{bracket2}  
\end{equation}
Thus, to approximate the term $J'(x)^2$, we need to excite a second-order Lie bracket of the above form. This can be done by introducing three dither signals, for example, as in~\cite{GZ18}.
With that in mind, we introduce  the following extremum seeking system to follow the dynamics of~\eqref{RMSProp_1D}: \par
{\small\begin{align}  \label{ES_2order_1D}
    \dot {\bar x}=& \sqrt{2\alpha\kappa \omega }\Big(\phi_1\Big(\frac{  J(\bar x)}{\sqrt{\bar v +\epsilon}}\Big)\cos{(\kappa\omega t)} +\phi_2\Big(\frac{  J(\bar x)}{\sqrt{\bar v +\epsilon}}\Big)\sin{(\kappa \omega t)}\Big)\nonumber \\
    &\qquad+\sigma\omega^{2/3}\Big(\psi_{1}(J(\bar x))\cos({\varkappa_1\omega t})\nonumber+\psi_{2}(J(\bar x))\sin({\varkappa_2\omega t})\Big),\\
  \dot {\bar v }  =& -\beta  \bar v +\omega^{2/3}J(\bar x)\cos({\varkappa_1\omega t})\sin({\varkappa_2\omega t}),
  \end{align}} 
where  $\kappa,\varkappa_1,\varkappa_2{\in}\mathbb N$, $\varkappa_2^2>\varkappa_1^2$, $\sigma{=}\sqrt{4\beta (\varkappa_2^2{-}\varkappa_1^2)}$,
$
[\phi_1,\phi_2]\equiv -1$, $[\psi_1,\psi_2]\equiv1.
$
A generating formula for pairs of functions satisfying the above properties can be found in~\cite{GZE18}. We will also consider several specific choices in Section~\ref{sec_exmpls}.
%
The dithers 
$
u_1(t)=\sqrt{2\alpha\kappa}\cos{(\kappa\omega t)}$, $u_2(t)=\sqrt{2\alpha\kappa}\sin{(\kappa\omega t)}
$
 excite the first order Lie bracket from~\eqref{bracket1}~\cite{DurrAuto,GZ23}, 
while the dithers 
$$\begin{aligned}
&w_1(t)=\sigma\cos({\varkappa_1\omega t}), \, w_2(t)=\sigma\sin({\varkappa_2\omega t}), \\
&w_3(t)= \cos({\varkappa_1\omega t})\sin({\varkappa_2\omega t}).
\end{aligned}
$$
excite the second-order Lie bracket from~\eqref{bracket2}~\cite{GZ18}.
To ensure that there are no other first- and second-order Lie brackets generated by the above dithers, we refer to the following non-resonance assumptions~\cite{GZ18}:

\textit{The numbers $\kappa,\varkappa_1,\varkappa_2\in\mathbb N$ are pairwise distinct, $\kappa\notin\{\varkappa_1+\varkappa_2,|\varkappa_1-\varkappa_2|\}$, and ${\varkappa_1}/{\varkappa_2}\notin\{1,2,3,{1}/{2},{1}/{3}\}$.}

Next, we extend this idea to multi-variable setting and describe the stability properties of the resulting ES system.

\paragraph{Multi-variable case}

Denote by $q=(x^\top,v^\top)^\top\in\mathbb R^{2n}$ the state of the reference system~\eqref{RMSProp}, and by $\bar q=(\bar x^\top,\bar v^\top)^\top\in\mathbb R^{2n}$ the state of the corresponding extremum seeking system:
{\small \begin{equation}
    \label{ES_2order_q}
    \begin{aligned}
 \dot{\bar q}{=}f_0(\bar q)&{+}\omega^{1/2}\sum_{i=1}^{n}\sum_{j=1}^2f_{ij}(\bar q)u_{ij}(t){+}\omega^{2/3}\sum_{i=1}^{n}\sum_{j=1}^3g_{ij}(\bar q)w_{ij}(t),       
    \end{aligned}
\end{equation}}
where, for $i\in\{1,\dots,n\}$, $j=1,2$,
$$\begin{aligned}
&\ f_{ij}(\bar q)=\phi_j\Big(\tfrac{J(\bar x)}{\sqrt{\bar v_i+\epsilon}}\Big)e_i,\,g_{ij}(\bar q)=\psi_j(J(\bar x))e_i,\\
& g_{i3}(\bar q)=J(\bar x)e_{i+n},\,\,f_0(\bar q)=-(0,\dots,0,\beta_1\bar v_1,\dots,\beta_n\bar v_n)^\top,
\end{aligned}
$$
$e_j$ denotes the unit vector in $\mathbb R^{2n}$ with nonzero $j$-th entry.
The dithers
$$
u_{i1}(t)=\sqrt{2\alpha_i\kappa_i}\cos{(\kappa_i\omega t)},\,u_{i2}(t)=\sqrt{2\alpha_i\kappa_i}\sin{(\kappa_i\omega t)}
$$
excite the Lie brackets $\alpha_i[f_{i1},f_{i2}]$, and the dithers
$$
\begin{aligned}
&w_{i1}(t)=\sigma_i\cos({\varkappa_{i1}\omega t}), \, w_{i2}(t)=\sigma_i\sin({\varkappa_{i2}\omega t}), \\
&w_{i3}(t)=\cos({\varkappa_{i1}\omega t})\sin({\varkappa_{i2}\omega t})
\end{aligned}
$$
with $\sigma_i=\sqrt{4\beta_i (\varkappa_{i2}^2{-}\varkappa_{i1}^2)}$ exciting the Lie brackets $\beta_i[[g_{i1},g_{i2}],g_{i3}]$.
To ensure that no other Lie brackets  are excited, we extend the non-resonance assumption:
\begin{assumption}\label{ass_nonres} 
\textit{Let $ \varkappa_{i3}{ :=} \varkappa_{i1}{+}\varkappa_{i2}$, $\varkappa_{i4}{ :=} \varkappa_{i2}{-}\varkappa_{i1}>0$. We assume that, for all $i,j,k,\ell,m\in\{1,\dots,n\}$,
\begin{itemize}
    \item[i)]     $\varkappa_{i1}, \varkappa_{j2}, \varkappa_{k3}, \varkappa_{\ell4}$,  $\kappa_{m}$ are natural pairwise distinct;
    \item[ii)] there are no third order resonances between    $\varkappa_{i1}$, $\varkappa_{j2}$, $\varkappa_{k3}$,  $\varkappa_{\ell4}$, except those imposed by the definitions of $\varkappa_{k3}$ and $\varkappa_{\ell4}$. \\
\end{itemize}
}\end{assumption}
\begin{remark}
In the above control design, the parameters $\alpha_i,\beta_i$ are included in the dither coefficients, and the term $\frac{1}{\sqrt{\bar v_i+\epsilon}}$ is placed in the arguments of $\phi_j$. Alternative choices include incorporating $\alpha_i,\beta_i$ into $\phi_j$ and $\psi_j$, for example,
$\phi_{j}\Big(\frac{\alpha_i J(\bar x)}{\sqrt{\bar v_i+\epsilon}}\Big)$, 
$\psi_{j}(\beta_i J(\bar x))$,
or placing $\frac{1}{\sqrt{\bar v_i+\epsilon}}$ as a multiplier, e.g.,
$\frac{1}{\sqrt[4]{\bar v_i+\epsilon}}\phi_{j}\big(J(\bar x)\big).$
All these constructions lead to the same Lie bracket systems, although the resulting extremum seeking systems may exhibit different qualitative behavior, such as convergence rate or sensitivity to parameter choices.
\end{remark}

Before stating the main result of this subsection, we specify the assumptions imposed on the cost function $J$ and the  control vector fields. 

\begin{assumption}[Properties of the cost function]
\textit{Let $\mathcal D\subseteq\mathbb R^n$ be a domain, $x^*\in \mathcal D$, and $J\in C^2(\mathcal D;\mathbb R)$.
We assume
\begin{enumerate}
\item[i)] $J(x)>J(x^*)=:J^*$ for all $x\in\mathcal D\setminus\{x^*\}$;
\item[ii)] $\nabla J(x)=0$ if and only if $x=x^*$.
\end{enumerate}}
\end{assumption}
\begin{assumption}[Properties of control vector fields]
    \textit{The functions $\phi_{1},\phi_{2},\psi_{1},\psi_{2}\in C^2(\mathbb R;\mathbb R)$  satisfy}
\begin{center}
    $
[\phi_1,\phi_2](y)\equiv -1,\qquad [\psi_1,\psi_2](z)\equiv 1.
$
\end{center}
\end{assumption}
The following result describes the stability properties of~\eqref{ES_2order_q}.
\begin{theorem}\label{thm_RMSProp}
\textit{Consider system~\eqref{ES_2order_q} with cost function $J$, assume that Assumptions~1--3 are satisfied. Then  the point $q^*=({x^*}^\top,0^\top)^\top$ is  practically uniformly asymptotically stable provided that $\bar v_i(0)\ge 0$.}
\end{theorem}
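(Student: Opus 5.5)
The proof follows the standard Lie bracket approximation route \cite{DurrAuto,GZ18,GZE18}. First I identify the Lie bracket system associated with \eqref{ES_2order_q}. Then I show that it coincides with the RMSProp dynamics \eqref{RMSProp}. Finally I transfer the local asymptotic stability from Theorem~\ref{thm_stab} to practical uniform asymptotic stability of \eqref{ES_2order_q}.

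\emph{Step 1: the bracket system.} For systems of the form $\dot{\bar q}=f_0+\omega^{1/2}\sum f_{ij}u_{ij}+\omega^{2/3}\sum g_{ij}w_{ij}$ with $T$-periodic zero-mean dithers, the averaging/Chen--Fliess expansion in \cite{GZ18} gives the Lie bracket system
\[
\dot q=f_0(q)+\sum_{i,j<k}\nu_{ijk}[f_{ij},f_{ik}](q)+\sum \lambda_{\cdot}[[g_{\cdot},g_{\cdot}],g_{\cdot}](q).
\]
The coefficients $\nu,\lambda$ are iterated integrals of the dithers. I compute these coefficients explicitly.
\begin{itemize}
\item The pair $u_{i1},u_{i2}$ yields $\alpha_i[f_{i1},f_{i2}]$.
\item Because of the scaling $\omega^{2/3}$, only third-order iterated integrals of the $w$'s survive at order $O(1)$. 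Second-order integrals of the $w$'s, and mixed $u$--$w$ integrals, carry a net positive power of $\omega^{-1}$ or vanish.
\item The normalisation $\sigma_i=\sqrt{4\beta_i(\varkappa_{i2}^2-\varkappa_{i1}^2)}$ is chosen so that the triple integral of $w_{i1},w_{i2},w_{i3}$ equals $\beta_i$.
\item Assumption~\ref{ass_nonres} is used to show that all other first-, second- and cross-index third-order averages vanish. This includes products in which $w_{i3}=\frac12(\sin\varkappa_{i3}\omega t+\sin\varkappa_{i4}\omega t)$ resonates with $w_{j1},w_{j2}$ for $j\ne i$, and those involving the $\kappa_m$.
\end{itemize}

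\emph{Step 2: identification with \eqref{RMSProp}.} By \eqref{bracket1}, applied componentwise with $\bar v_i$ entering only as a parameter in the $x_i$-direction, and using $[\phi_1,\phi_2]\equiv-1$, we get $\alpha_i[f_{i1},f_{i2}]=-\alpha_i\nabla_iJ(x)(v_i+\epsilon)^{-1/2}e_i$. Here the $x$-dependence through $J$ produces $\nabla_i J$, and $v$-derivatives do not contribute, since the fields point only along $e_i$. By \eqref{bracket2} with $[\psi_1,\psi_2]\equiv1$, we get $\beta_i[[g_{i1},g_{i2}],g_{i3}]=\beta_i\nabla_iJ(x)^2e_{i+n}$. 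Together with $f_0$, this reproduces \eqref{RMSProp} exactly.

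\emph{Step 3: stability transfer.} By Theorem~\ref{thm_stab}(1), $q^*$ is locally asymptotically stable for \eqref{RMSProp} relative to $\mathcal S$. The Lie bracket approximation theorem for second-order brackets \cite{GZ18}, in the local/semiglobal form of \cite{DurrAuto}, then gives practical uniform asymptotic stability of $q^*$ for \eqref{ES_2order_q}. Its hypotheses are: $J\in C^2$ and $\phi_j,\psi_j\in C^2$ (Assumptions~2 and~3), which make the vector fields and the required brackets $C^1$ with bounded derivatives on compacts; and the dithers are bounded, periodic and zero-mean.

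\emph{Main obstacle.} The hardest part is the constraint $\bar v_i\ge0$. The fields $f_{ij}$ are only defined (and smooth) for $\bar v_i>-\epsilon$, and the stability in Theorem~\ref{thm_stab} holds only relative to $\mathcal S$. Meanwhile $\bar v_i$ in \eqref{ES_2order_q} is driven by the oscillating term $\omega^{2/3}J w_{i3}$, so it need not remain nonnegative. I would handle this as follows.
\begin{itemize}
\item First, pass to the shifted cost $J-J^*$. This is harmless because the brackets depend only on $\nabla J$, and it makes the $\bar v$-forcing small near $x^*$.
\item Then note that the excursion of $\bar v_i$ below its averaged value is $O(\omega^{-1/3})$. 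Hence for $\bar v_i(0)\ge0$ and $\omega$ large, $\bar v_i(t)\ge -\epsilon/2$ for all $t$.
\item Finally, apply the approximation theorem on the enlarged compact set $\{v_i\ge-\epsilon/2\}$, where the reference system is still well defined. Since the reference system keeps $v_i\ge0$ forward invariant and attracts $\{v_i\ge -\epsilon/2\}$ toward it exponentially, the $\mathcal S$-relative stability extends to a neighbourhood.
\end{itemize}
A secondary technical point is checking the non-resonance bookkeeping in Step 1 across all indices.
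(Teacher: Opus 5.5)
Your proposal is a genuinely different route from the paper's, and it is sound in outline.

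\textbf{The two routes.} You go through the limiting system: you compute the bracket system, identify it with \eqref{RMSProp}, and transfer the stability of Theorem~\ref{thm_stab} through a D\"urr-type approximation/stability-transfer theorem. The paper never uses the limiting system's trajectories or Theorem~\ref{thm_stab}. It writes the one-period Chen--Fliess map of \eqref{ES_2order_q} with the drift included, $\bar q(T_\omega)=\bar q^0+2\pi\omega^{-1}F(\bar q^0)+O(\omega^{-4/3})$, where $F$ is the RMSProp field, and then works directly on the grid $kT_\omega$. For $\bar x$, it uses $J$ itself as a discrete-time Lyapunov function. Per period, $J$ decreases by at least $\lambda\omega^{-1}\|\nabla J\|^2$, up to an $O(\omega^{-4/3})$ error, as long as $\bar v_i\in[-m_v,M_v]$. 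For $\bar v_i$, it iterates the contractive map $\bar v_i\mapsto(1-2\pi\beta_i\omega^{-1})\bar v_i+2\pi\beta_i\omega^{-1}\nabla_iJ^2+O(\omega^{-4/3})$ and sums the remainders geometrically, which keeps $\bar v_i$ in that interval for all $t$.

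\textbf{What each buys.} Your route gives modularity and a stronger by-product: closeness to RMSProp trajectories on compact time intervals, and a semiglobal statement whenever \eqref{RMSProp} is globally asymptotically stable. The paper's route is self-contained. It needs neither Theorem~\ref{thm_stab}, whose proof is only sketched, nor a general transfer theorem, and it produces explicit thresholds $\omega_0,\dots,\omega_3$.

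\textbf{The black box.} The theorem you invoke is not literally available in the references you name. \cite{DurrAuto} treats first-order brackets only, and the paper has to adapt \cite{GZ18} precisely because it has no drift. You need an approximation theorem that covers drift together with first-order brackets excited at amplitude $\omega^{1/2}$ and second-order brackets excited at amplitude $\omega^{2/3}$. Otherwise you must derive the one-period expansion with drift yourself, which is exactly the paper's Step~2.

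\textbf{Repairs to your ``main obstacle'' paragraph.} Two steps there should be replaced.

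First, passing to $J-J^*$ is not a without-loss-of-generality step. It leaves the bracket system unchanged, but it changes \eqref{ES_2order_q} itself: the forcing $\omega^{2/3}J(\bar x)w_{i3}$ and the arguments of $\phi_j,\psi_j$ are different. So it would prove the result for another system. It is also unnecessary, since a one-period excursion of $\bar v_i$ is $O(\omega^{-1/3})$ with a constant proportional to $\sup|J|$ on a compact set, as in \eqref{apriori}.

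Second, that one-period bound does not give $\bar v_i\ge-\epsilon/2$ for all $t$. A long-horizon variation-of-constants or integration-by-parts estimate fails because $\dot{\bar x}=O(\omega^{2/3})$. In fact the forcing $\omega^{2/3}J(\bar x)w_{i3}$ is correlated with the dither-induced oscillation of $\bar x$; that correlation is exactly what produces $\nabla_iJ^2$. Uniform-in-time control therefore needs either the paper's iteration together with confinement of $\bar x$, or the very practical stability you are trying to prove.

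\textbf{Clean fix within your framework.} Use the observation you already make. Since $\dot v_i\ge-\beta_i v_i$ in \eqref{RMSProp}, $q^*$ is locally asymptotically stable in a full neighborhood contained in $\{v_i>-\epsilon\}$, not just relative to $\mathcal S$. Apply the local transfer theorem there, after cutting off the vector fields outside a compact neighborhood. Practical stability itself then keeps $\bar v_i$ near $0$. This also shows that $\bar v_i(0)\ge0$ is not really needed, which is consistent with the paper's use of $\bar v_i^0\in[-\delta_v,\delta_v]$.
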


The proof is in the Appendix. In particular, it shows that system~\eqref{ES_2order_q} approximates the dynamics of~\eqref{RMSProp} for sufficiently large values of~$\omega$. To achieve this, we introduced a novel construction based on second order Lie brackets to approximate the squared gradient term. This provides a nontrivial extension of Lie bracket approximation techniques and demonstrates their applicability to the realization of adaptive optimization dynamics. At the same time, several limitations of this approach should be noted. In particular, positivity of the variable $v_i$ is ensured only for sufficiently large values of $\omega$, which  gives more  restrictions for choosing it. Moreover, selecting frequencies that satisfy the non-resonance assumptions may become difficult, especially for multi-variable systems. Finally, the use of second order Lie brackets typically results in larger oscillations compared to first-order extremum seeking schemes. 
Unlike this approach, the RMSProp-like algorithms~\eqref{RMSProp_mod} and~\eqref{RMSProp_mod2} can be approximated using first order Lie brackets, as it will be show in the next subsections. 

\subsection{Extremum seeking system corresponding to~\eqref{RMSProp_mod}}

 To follow the dynamics of~\eqref{RMSProp_mod}, the corresponding gradient terms can be approximate with the first order Lie brackets. The main difficulty lies in approximating the term  $\nabla_iJ(x)$ in the equations for  $\dot{\bar\eta}$. This can be achieved, similarly to~\cite{Labar19}, by introducing dither signals in both the $\dot{\bar x}$ and $\dot{\bar\eta}$. We extend this idea by introducing a general class of control vector fields whose first-order Lie bracket yields the direction $
     \nabla_i J(x) e_{i+2n}$.
Namely, we denote $\bar q=(\bar x^\top,\bar v^\top,\bar\eta^\top)\in\mathbb R^{3n}$ and design the extremum seeking system as \par
{\small  \begin{equation}
    \dot{\bar q}=f_0(\bar q)+\sqrt\omega\sum_{\underset{j=1,2}{i=1}}^{n}\Big(f_{ij}(\bar q)u_{ij}(t)+\sum_{k=1,2}g_{ijk}(\bar q)w_{ijk}(t)\Big),\label{ES_1order}
\end{equation}}
$$
\begin{aligned}
\text{where }    f_0(\bar q)=-(0,\dots,0,\beta_1&(\bar v_1-\bar\eta_1^2),\\
    \dots,\beta_1&(\bar v_n-\bar\eta_n^2),\gamma_1\bar\eta_1,\dots,\gamma_n\bar\eta_n)^\top,
\end{aligned}
$$
the control vector fields  $f_{i1}$,  $f_{i2}$ and dithers $u_{1i}$, and $u_{2i}$ have the same form as in~\eqref{ES_2order_q}, with appropriate adjustments for the dimension:
 for $i\in\{1,\dots,n\}$, $j=1,2$, 
$
f_{ij}(\bar q)=\phi_j\Big(\frac{J(\bar x)}{\sqrt{\bar v_i+\epsilon}}\Big)e_i,
$
with $e_i$ denoting the unit vector in $\mathbb R^{3n}$ with nonzero $i$-th entry, and
$$
u_{i1}(t)=\sqrt{2\alpha_i\kappa_i}\cos{(\kappa_i\omega t)},\,u_{i2}(t)=\sqrt{2\alpha_i\kappa_i}\sin{(\kappa_i\omega t)}.
$$
Furthermore,  for $k=1,2$,
$$
\begin{aligned}
&g_{i1k}(\bar q)=\psi_{1k}\big(J(\bar x)\big)e_{i},\,g_{i2k}(\bar q)=\psi_{2k}\big(J(\bar x)\big)e_{i+2n},\\
&w_{i1k}=\sqrt{2\gamma_i\varkappa_{ik}}\cos{(\varkappa_{ik}\omega t)},\,w_{i2k}=\sqrt{2\gamma_i\varkappa_{ik}}\sin{(\varkappa_{ik}\omega t)},
\end{aligned}
$$
where $\kappa_i,\varkappa_{jk}\in\mathbb N$ are pairwise distinct, and the following assumption holds:

\begin{assumption}[Properties of the control vector fields]
\textit{The functions
$\phi_{1},\phi_2,\psi_{11},\psi_{12},\psi_{21},\psi_{22}\in C^2(\mathbb R;\mathbb R)$
satisfy
$
[\phi_1,\phi_2]\equiv -1,
$
and
$\displaystyle
\psi_{11}\psi_{21}'+\psi_{12}\psi_{22}'\equiv 1.
$}
\end{assumption}

Under the above assumptions, the dithers $w_{i11}$, $w_{i12}$, $w_{i21}$, $w_{i22}$ excite the combination of first order Lie brackets
$\gamma_i([g_{i11},g_{i21}]+[g_{i12},g_{i22}])$, or, equivalently, 
{\small $$
\begin{aligned}
\gamma_i\big[(\psi_{11}\circ J)e_{i},&(\psi_{21}\circ J)\big)e_{i+2n}\big](\bar x)\\
&+\gamma_i\big[(\psi_{12}\circ J)e_{i},(\psi_{22}\circ J)\big)e_{i+2n}\big](\bar x)\\
=\gamma_i(\psi_{11}\psi_{21}'&+\psi_{12}\psi_{22}')\nabla_i{J(\bar x)}e_{i+2n}=\gamma_i\nabla_i{J(\bar x})e_{i+2n}.
\end{aligned}
$$
}
A possible option for the choice of $ \psi_{11},\psi_{12},\psi_{21},\psi_{22}$ is
$\psi_{11}=1$, $\psi_{21}=J$, $\psi_{12}=\psi_{22}=0$,
as it was done in~\cite{Labar19}, or
$ \psi_{11}=-\psi_{22}=\psi_1$, $\psi_{21}=\psi_{12}=\psi_2,$
with
$
[\psi_1,\psi_2]\equiv 1.
$

Similarly to the previous subsection, the stability properties of~\eqref{ES_1order} are described in the following theorem.
\begin{theorem}\label{thm_RMSProp_mod}
\textit{Consider system~\eqref{ES_1order} with cost function $J$, assume that Assumptions~2 and~4 are satisfied. Then  the point $q^*=({x^*}^\top,0^\top,0^\top)^\top$ is  practically uniformly asymptotically stable provided that $\bar v_i(0)\ge 0$.}
\end{theorem}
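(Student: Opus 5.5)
The plan is the standard two-step Lie bracket argument of~\cite{DurrAuto,GZE18}. First I compute the Lie bracket system associated with~\eqref{ES_1order} and show that it coincides with the modified RMSProp dynamics~\eqref{RMSProp_mod}. Then I transfer the local asymptotic stability of Theorem~\ref{thm_stab}, item 2), to practical uniform asymptotic stability of~\eqref{ES_1order}.

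\textbf{Step 1: the Lie bracket system.} System~\eqref{ES_1order} is in the input-affine form $\dot{\bar q}=f_0(\bar q)+\sqrt\omega\sum_\ell h_\ell(\bar q)\,\nu_\ell(\omega t)$. The dithers $\nu_\ell$ are $2\pi$-periodic, zero-mean trigonometric functions with frequencies $\kappa_i,\varkappa_{ik}$. The Lie bracket system is
\[
\dot q=f_0(q)+\sum_{\ell<m}\Lambda_{\ell m}[h_\ell,h_m](q),
\]
with $\Lambda_{\ell m}=\frac{1}{2\pi}\int_0^{2\pi}\!\!\int_0^{\theta}\nu_m(\theta)\nu_\ell(s)\,ds\,d\theta$. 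I will evaluate these coefficients as follows.
\begin{itemize}
\item Since all of $\kappa_i,\varkappa_{jk}$ are pairwise distinct, $\Lambda_{\ell m}=0$ for any pair of dithers with different frequencies.
\item For the pair $u_{i1},u_{i2}$ one obtains $\alpha_i[f_{i1},f_{i2}]$. By~\eqref{bracket1} and $[\phi_1,\phi_2]\equiv-1$ this equals $-\alpha_i\nabla_iJ(x)(v_i+\epsilon)^{-1/2}e_i$. The derivative of $\phi_j$ with respect to $v_i$ does not contribute, because both fields point only along $e_i$ and do not depend on $v_i$ in the $e_i$ direction through the $x_i$-derivative.
\item For the pairs $w_{i1k},w_{i2k}$, $k=1,2$, one obtains $\gamma_i([g_{i1k},g_{i2k}])$ summed over $k$. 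By the computation after Assumption~4 this equals $\gamma_i\nabla_iJ(x)e_{i+2n}$.
\end{itemize}
Adding $f_0$ reproduces exactly~\eqref{RMSProp_mod}.

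\textbf{Step 2: transfer of stability.} Assumption~2 is stronger than the hypotheses of Theorem~\ref{thm_stab}, so item 2) gives local asymptotic stability of $q^*$ for~\eqref{RMSProp_mod} relative to $\mathcal S$. I then invoke the approximation theorem of~\cite{DurrAuto,GZE18}: if $q^*$ is locally uniformly asymptotically stable for the Lie bracket system, then it is practically uniformly asymptotically stable for the original system. That theorem needs the vector fields and their first derivatives (and the brackets' derivatives) to be bounded on compact sets. This holds because $J\in C^2$, $\phi_j,\psi_{jk}\in C^2$, and $(\bar v_i+\epsilon)^{-1/2}$ is smooth on $\{\bar v_i>-\epsilon\}$.

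\textbf{Main obstacle.} The hardest point is the domain restriction. The Lie bracket result is usually stated for stability on an open set, whereas here stability holds only relative to $\mathcal S$, and $\bar v_i$ might become negative along trajectories of~\eqref{ES_1order}. Two features help. First, $\bar v_i$ carries no dither: $\dot{\bar v}_i=-\beta_i(\bar v_i-\bar\eta_i^2)$. Hence $\bar v_i(0)\ge0$ implies $\bar v_i(t)\ge0$ for all $t$ by a comparison argument, since $\dot{\bar v}_i\ge-\beta_i\bar v_i$. So $\mathcal S$ is forward invariant for both the original and the averaged system, and the closeness-of-trajectories estimate applies within $\mathcal S\cap\{\bar v_i>-\epsilon/2\}$. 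Second, I would verify that the reference stability estimate is uniform on compact subsets of $\mathcal S$; for this I would construct a cascade Lyapunov argument or use a converse Lyapunov function on $\mathcal S$. With this, the standard proof in~\cite{GZE18} goes through verbatim and yields the claim.
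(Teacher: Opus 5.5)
Your proposal is correct, but it follows a different route from the paper's proof. The paper never invokes Theorem~\ref{thm_stab} or a general approximation theorem. Instead it repeats the Appendix argument for Theorem~\ref{thm_RMSProp}:
\begin{itemize}
\item a Chen--Fliess expansion of \eqref{ES_1order} over one period $T_\omega=2\pi\omega^{-1}$, truncated after first-order brackets;
\item $J$ itself used as a discrete Lyapunov function for $\bar x$, decreasing by about $\lambda\omega^{-1}\|\nabla J(\bar x^0)\|^2$ per period up to an $O(\omega^{-3/2})$ remainder;
\item one-period contraction estimates, summed as geometric series, to control $\bar\eta$ and $\bar v$;
\item nonnegativity of $\bar v_i$ from the same variation-of-constants observation you make.
\end{itemize}
You instead compute the Lie bracket system and transfer the stability of Theorem~\ref{thm_stab}, item 2), through the theorem of \cite{DurrAuto}. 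Your coefficients and brackets are right. The reason the $v_i$-derivatives of $\phi_j$ drop out is simply that they multiply the zero $v_i$-component of the other field.

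Your route is shorter and modular, since all ES-specific work reduces to one bracket computation. However, it is only as complete as Theorem~\ref{thm_stab}, which the paper merely sketches. It also needs a localized version of the approximation theorem, because the vector fields live only on $\mathcal D\times(-\epsilon,\infty)^n\times\mathbb R^n$. And it yields the local (``singular'') practical statement without explicit $\omega$-thresholds.

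The paper's direct route is self-contained with respect to the reference dynamics. It gives explicit bounds on $\omega$ and a region of attraction tied to sublevel sets of $J$. It is also the same template that handles Theorem~\ref{thm_RMSProp}, where the first-order theorem of \cite{DurrAuto} does not apply directly.

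The obstacle you flag is milder than you think. Since $\bar v_i$ carries no dither, $v_i(t)\ge v_i^0e^{-\beta_i t}$. So for $|v_i^0|<\delta<\epsilon$, \eqref{RMSProp_mod} is asymptotically stable on a full neighborhood of $q^*$: use $J$ as a LaSalle function, then the cascade for $\eta$ and $v$. Uniformity is automatic for this autonomous system. Hence the standard theorem applies as is, with no relative-to-$\mathcal S$ version and no converse Lyapunov construction.
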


The proof of the theorem is similar to the proof of Theorem~2. The main difference is that it uses the Chen--Fliess series expansion up to first-order terms.
Moreover, since the $\bar v_i$-component admits the representation
$$
\bar v_i(t)=\bar v_i^0e^{-\beta_i t}+\beta_i \int_0^t \bar\eta(s)^2e^{-\beta_i (t-s)}ds,
$$
it remains positive for  $t\ge0$ provided $\bar v_i^0\ge0$, thus, there is no need to choose $\omega$ to ensure this requirement. However, the additional equation increases the dimension of the system.

\subsection{Extremum seeking system corresponding to~\eqref{RMSProp_mod2}}

Similarly to the previous subsection, the third algorithm~\eqref{RMSProp_mod2} can be realized as an extremum seeking system without exciting second order Lie brackets. Unlike the previous construction, its dimension is the same as that of the original algorithm~\eqref{RMSProp}. The resulting system is similar to~\eqref{ES_1order}, but without the $\dot{\bar\eta}$
 subsystem, i.e., with  $\bar q=(\bar x^\top,\bar v^\top)\in\mathbb R^{2n}$:
\par
{\small \begin{equation}
    \dot{\bar q}=f_0(\bar q)+\sqrt\omega\sum_{i=1}^{n}\Big(\sum_{j=1}^2f_{ij}(\bar q)u_{ij}(t)+\sum_{j,k=1}^2g_{ijk}(\bar q)w_{ijk}(t)\Big),\label{ES_1order_mod}
\end{equation}}
where
$
    f_0(\bar q)=-(0,\dots,0,\beta_1\eta_1,\dots,\beta_n\eta_n)^\top,
$
the control vector fields  $f_{i1}$,  $f_{i2}$ and dithers $u_{1i}$, and $u_{2i}$ are the same as in~\eqref{ES_2order_q}, and $g_{ijk}(\bar q)$, $w_{ijk}$ have the same form as in~\eqref{ES_1order}, with appropriate adjustments for the dimension:
 for $i\in\{1,\dots,n\}$, $j,k=1,2$,
$$
\begin{aligned}
&g_{i1k}(\bar q)=\psi_{1k}\big(J(\bar x)\big)e_{i},\,g_{i2k}(\bar q)=\psi_{2k}\big(J(\bar x)\big)e_{i+n},\\
&w_{i1k}=\sqrt{2\gamma_i\varkappa_{ik}}\cos{(\varkappa_{ik}\omega t)},\,w_{i2k}=\sqrt{2\gamma_i\kappa_{ik}}\sin{(\kappa_{ik}\omega t)},
\end{aligned}
$$
where $\kappa_i,\varkappa_{jk}\in\mathbb N$ are pairwise distinct, and $e_j$ denotes the unit vector in $\mathbb R^{2n}$ with nonzero $j$-th entry.
Then the following statement holds. 
\begin{theorem}\label{thm_RMSProp_mod2}
\textit{Consider system~\eqref{ES_1order_mod} with cost function $J$, assume that Assumptions~2 and~4 are satisfied. Then  the point $q^*=({x^*}^\top,0^\top)^\top$ is  practically uniformly asymptotically stable.}
\end{theorem}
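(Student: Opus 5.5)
The plan follows the standard first-order Lie bracket approximation argument (as in \cite{DurrAuto,GZE18}), applied as in the proof of Theorem~\ref{thm_RMSProp_mod}. It has three steps. First, I compute the Lie bracket system associated with~\eqref{ES_1order_mod} and show that it coincides with~\eqref{RMSProp_mod2}. Second, I invoke Theorem~\ref{thm_stab}, item~3, for the asymptotic stability of~\eqref{RMSProp_mod2}. Third, I use the general result that local (uniform) asymptotic stability of the Lie bracket system implies practical uniform asymptotic stability of the original system as $\omega\to\infty$. Note that $f_0$ is written with $\eta_i$, but in the present context it must be read as $f_0(\bar q)=-(0,\dots,0,\beta_1\bar v_1,\dots,\beta_n\bar v_n)^\top$. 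The $-\beta_i\nabla_iJ$ part of the $\dot v_i$-equation is produced by the $g_{ijk}$ brackets, with $\gamma_i$ identified with $\beta_i$ (the dither amplitudes are chosen accordingly).

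\textbf{Step 1: the Lie bracket system.} The dithers $u_{i1},u_{i2}$ have frequency $\kappa_i$ and amplitude $\sqrt{2\alpha_i\kappa_i}$. They generate the averaged term $\alpha_i[f_{i1},f_{i2}]$. Using $[\phi_1,\phi_2]\equiv-1$ and the fact that $f_{i1},f_{i2}$ have only an $e_i$-component, I compute
\begin{equation*}
[f_{i1},f_{i2}](\bar q)=-\frac{\nabla_iJ(\bar x)}{\sqrt{\bar v_i+\epsilon}}\,e_i .
\end{equation*}
The $\bar v_i$-dependence of the argument of $\phi_j$ does not contribute, because the vector fields have no $e_{i+n}$-component. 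The pairs $(w_{i1k},w_{i2k})$ with frequency $\varkappa_{ik}$ generate $\gamma_i\bigl([g_{i11},g_{i21}]+[g_{i12},g_{i22}]\bigr)$. Since $g_{i1k}$ points along $e_i$, depends only on $\bar x$, and $g_{i2k}$ points along $e_{i+n}$, the term $\partial g_{i1k}/\partial\bar q\cdot g_{i2k}$ vanishes. Assumption~4 then gives
\begin{equation*}
\gamma_i(\psi_{11}\psi_{21}'+\psi_{12}\psi_{22}')\nabla_iJ\,e_{i+n}=\gamma_i\nabla_iJ(\bar x)\,e_{i+n}.
\end{equation*}
To fix the sign, I choose the pairing so that the bracket is $-\gamma_i\nabla_iJ\,e_{i+n}$, or equivalently interchange the roles of cosine and sine. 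All frequencies $\kappa_i,\varkappa_{jk}$ are pairwise distinct, so the cross terms $\int u\int w$ average to zero. In the first-order expansion, only brackets between sine/cosine pairs at equal frequencies survive. With $\beta_i=\gamma_i$, the resulting Lie bracket system is exactly~\eqref{RMSProp_mod2}.

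\textbf{Step 2: closeness of trajectories.} Assumption~2 gives $J\in C^2$, and Assumption~4 gives $\phi_j,\psi_{jk}\in C^2$. Hence $f_0,f_{ij},g_{ijk}$ are $C^2$ on $\mathcal D\times\mathbb R^n$, and their brackets are $C^1$. This is the regularity required by the first-order Chen--Fliess (or variation-of-constants) expansion on compact sets. Following~\cite{DurrAuto,GZE18}, I show that for any compact set and any time horizon $T$, trajectories of~\eqref{ES_1order_mod} and of~\eqref{RMSProp_mod2} that start at the same point stay $O(\omega^{-1/2})$-close on $[0,T]$. The dithers are $2\pi/\omega$-periodic with zero mean, and the $\sqrt\omega$ scaling makes the iterated integrals of the pairs $(u_{i1},u_{i2})$ and $(w_{i1k},w_{i2k})$ produce exactly the coefficients $\alpha_i$ and $\gamma_i$.

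\textbf{Step 3: stability.} Theorem~\ref{thm_stab}, item~3, gives local asymptotic stability of $q^*$ for~\eqref{RMSProp_mod2}. Unlike the systems in Theorems~\ref{thm_RMSProp} and~\ref{thm_RMSProp_mod}, here $\sqrt{\bar v_i^2+\epsilon}\ge\sqrt\epsilon>0$ for all $\bar v_i\in\mathbb R$, so no positivity constraint on $\bar v_i(0)$ is needed and the result holds without restriction to a set $\mathcal S$. The standard theorem relating uniform asymptotic stability of the Lie bracket system to practical uniform asymptotic stability of the oscillatory system~\cite{DurrAuto} then yields the claim. Here the region of attraction is a compact subset of the basin of~\eqref{RMSProp_mod2}.

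The main obstacle is Step~1. One must verify that no spurious first-order brackets appear. Such brackets could arise between $f_{ij}$ and $g_{i1k}$, which both act along $e_i$ and both depend on $J$, so their brackets do not vanish identically. They could also arise across different indices $i$. Excluding them relies entirely on the distinctness of the frequencies. A second point to check is the sign and coefficient bookkeeping that matches $\gamma_i$ to $\beta_i$; the typo in the definition of $w_{i2k}$ ($\kappa_{ik}$ in place of $\varkappa_{ik}$) must also be read correctly for this matching to work.
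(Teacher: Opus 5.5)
Your argument is correct in outline but takes a different route from the paper. The paper says the proof of Theorem~\ref{thm_RMSProp_mod2} repeats the Appendix argument for Theorem~\ref{thm_RMSProp}. That argument has four parts:
\begin{itemize}
\item a one-period Chen--Fliess expansion $\bar q(T_\omega)=\bar q^0+T_\omega\bigl(f_0+\sum_i\alpha_i[f_{i1},f_{i2}]+\sum_{i,k}\gamma_i[g_{i1k},g_{i2k}]\bigr)(\bar q^0)+O(\omega^{-3/2})$, truncated at first-order brackets;
\item a Taylor estimate showing that $J$ decreases along $\bar x(kT_\omega)$ up to the remainder;
\item practical convergence of $\bar x$ into sublevel sets of $J$;
\item a separate linear recursion bounding $\bar v_i(kT_\omega)$.
\end{itemize}
It never uses Theorem~\ref{thm_stab} or a general averaging theorem. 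You instead compute the Lie bracket system, invoke Theorem~\ref{thm_stab}(3), and apply the result of \cite{DurrAuto}.

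Your route is shorter and modular. It also gives semi-global practical stability whenever the reference system is globally asymptotically stable. It fits this theorem well, because only first-order brackets occur and $\sqrt{\bar v_i^2+\epsilon}\ge\sqrt\epsilon$ makes all vector fields $C^2$ for every $\bar v_i$. The cost is twofold. First, you rely on Theorem~\ref{thm_stab}, whose proof the paper omits, so you should sketch it: $J-J^*$ is nonincreasing for any $v$, and $v$ is a stable linear filter driven by $\nabla J$. Second, you need a localization step because $J$ is only defined on $\mathcal D$. The paper's direct argument avoids both and gives explicit thresholds on $\omega$ and an explicit region of attraction built from sublevel sets of $J$.

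You also need to fix two bookkeeping slips.

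\emph{The sign.} No sign correction is needed. With the dithers as given (cosine on $g_{i1k}$, sine on $g_{i2k}$), the averaged term is $+\gamma_i[g_{i1k},g_{i2k}]$. Summed over $k$ this gives $+\gamma_i\nabla_iJ\,e_{i+n}$. With $\gamma_i=\beta_i$, that is exactly the $+\beta_i\nabla_iJ$ term in $\dot v_i=-\beta_i(v_i-\nabla_iJ)$. This is the same convention that produces $-\alpha_i\nabla_iJ/\sqrt{\cdot}$ from $[\phi_1,\phi_2]\equiv-1$. Swapping sine and cosine would change the system in the theorem and give $\dot v_i=-\beta_i(v_i+\nabla_iJ)$. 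That system is not \eqref{RMSProp_mod2}; it is only conjugate to it via $v\mapsto-v$.

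\emph{The denominator.} Your bracket $-\nabla_iJ/\sqrt{\bar v_i+\epsilon}\,e_i$ comes from taking $f_{ij}$ literally from \eqref{ES_2order_q}. To recover \eqref{RMSProp_mod2}, and for your Step~3 remark to hold, you must take $f_{ij}=\phi_j\bigl(J(\bar x)/\sqrt{\bar v_i^2+\epsilon}\bigr)e_i$, as in \eqref{ex1c}.
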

The proof follows the same lines as that of Theorem~2 and relies on the Chen--Fliess series expansion truncated to first-order terms, involving only first-order Lie brackets. In contrast to the approaches based on~\eqref{RMSProp} and~\eqref{RMSProp_mod}, the denominator $\sqrt{v_i^2+\epsilon}$ is non-zero by  construction, which eliminates additional well-definiteness issues.
\begin{figure*}[!hb]
\includegraphics[width=1\linewidth]{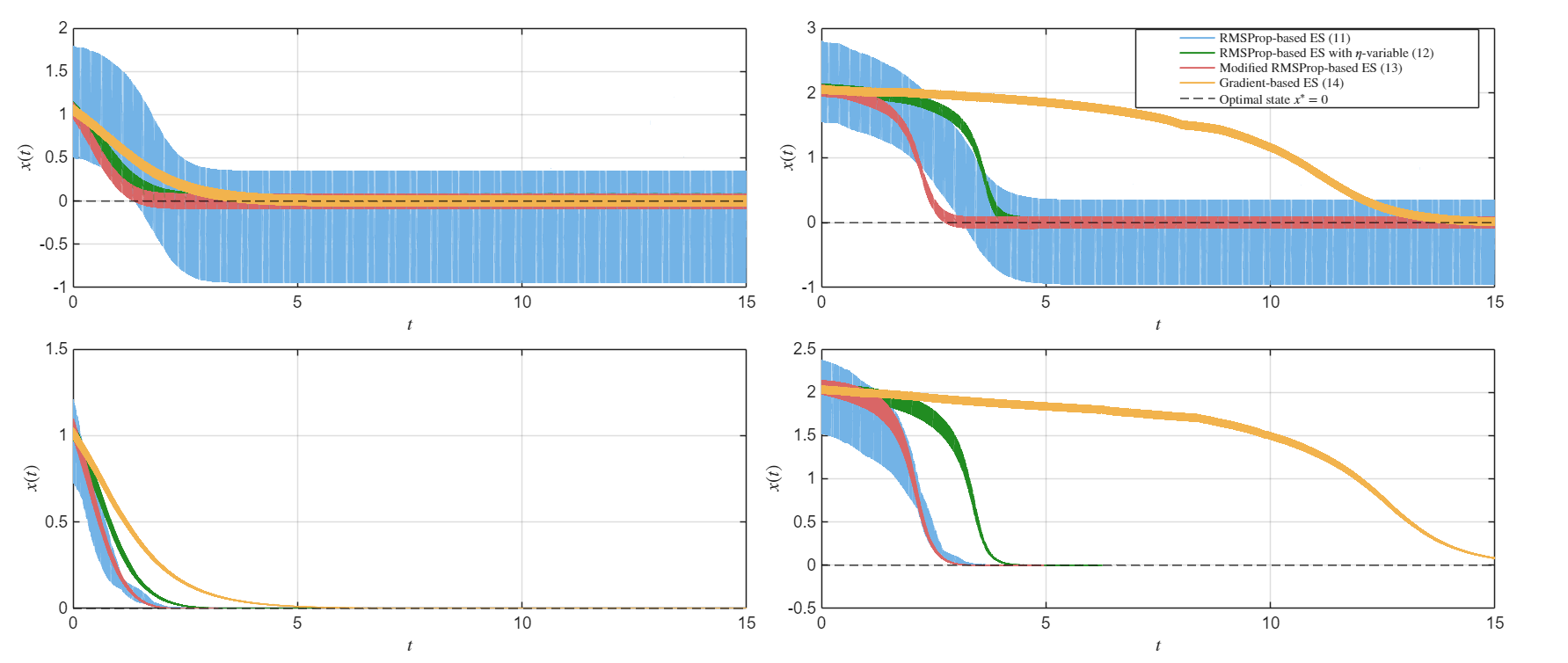}
  \caption{Time plots of the $x$-component of the solutions of systems~\eqref{ex1a}--\eqref{ex1d} with the cost function~\eqref{ex1cost}, parameters~\eqref{parameters1} and control vector fields defined by~\eqref{bounded} (top plots) and~\eqref{vanish} (bottom plots). The solutions are initialized at $x(0)=1$ (left plots) and $x(0)=2$ (right plots), with $v(0)=0.5$, $w(0)=0.5$.}
    \label{fig_ex1}
    \end{figure*}
\section{Numerical simulations}\label{sec_exmpls}
In this section, we compare all three extremum seeking approaches, as well as the classical gradient-based extremum seeking scheme from~\cite{DurrAuto,GZE18}, using numerical simulation. 
In all the consider cases, we put $\alpha_i{=}\beta_i{=}\gamma_i{=}1$, $i\in\{1,\dots,n\}$.
For convenience of reading, we present all four extremum seeking systems here. Namely, the extremum seeking system corresponding to the classical RMSProp algorithm~\eqref{RMSProp_1D} has the form~\eqref{ES_2order_q},\par
{\footnotesize
\begin{equation} 
\label{ex1a}
\begin{aligned}
  \dot {\bar x}_i=& \sqrt{2\alpha_i\kappa_i \omega}\Big(\phi_1\Big(\frac{  J(\bar x)}{\sqrt{\bar v_i +\epsilon}}\Big)\cos{(\kappa_i\omega t)} +\phi_2\Big(\frac{  J(\bar x)}{\sqrt{\bar v_i +\epsilon}}\Big)\sin{(\kappa_i \omega t)}\Big) \\
    &\qquad+\sigma_i\omega^{2/3}\Big(\psi_{1}(J(\bar x))\cos({\varkappa_{1i}\omega t})+\psi_{2}(J(\bar x))\sin({\varkappa_{2i}\omega t})\Big),\\
  \dot {\bar v }_i  =& -\beta_i  \bar v_i +\omega^{2/3}J(\bar x)\cos({\varkappa_{1i}\omega t})\sin({\varkappa_{2i}\omega t}).
\end{aligned}
    \end{equation}}
The  extremum seeking systems~\eqref{ES_1order} and~\eqref{ES_1order_mod}:
\par
\par
{\footnotesize
\begin{equation}\label{ex1b}
\begin{aligned}
&\dot{\bar x}_i
=
\sqrt{2\alpha_i\omega\kappa_i}
\big(\phi_1\Big(\frac{J(\bar x)}{\sqrt{\bar v_i +\epsilon}}\Big)\cos(\kappa_i\omega t)+\phi_2\Big(\frac{J(\bar x)}{\sqrt{\bar v_i +\epsilon}}\Big)\sin(\kappa_i\omega t)\big) \\
&
+\sqrt{2\gamma_i\omega}\Big(
\sqrt{\varkappa_{i1}}\,\psi_{11}(J(\bar x))\cos(\varkappa_{i1}\omega t)
+\sqrt{\varkappa_{i2}}\,\psi_{12}(J(\bar x))\cos(\varkappa_{i2}\omega t)
\Big),\\ 
&\dot{\bar v}_i
=
-\beta_i\big(\bar v_i-\bar \eta_i^2\big),\\ 
&\dot{\bar \eta}_i
=
-\gamma_i \bar \eta_i
+\sqrt{2\gamma_i\omega}\Big(
\sqrt{\varkappa_{i1}}\,\psi_{21}(J(\bar x))\sin(\varkappa_{i1}\omega t)\\
&\quad\quad\quad\qquad+\sqrt{\varkappa_{i2}}\,\psi_{22}(J(\bar x))\sin(\varkappa_{i2}\omega t)
\Big),
\end{aligned}
\end{equation}
}
and \par
{\footnotesize
\begin{equation}\label{ex1c}
\begin{aligned}
\dot{\bar x}_i
&=
\sqrt{2\alpha_i\omega\kappa_i}
\big(\phi_1\Big(\frac{J(\bar x)}{\sqrt{\bar v_i^2 +\epsilon}}\Big)\cos(\kappa_i\omega t)+\phi_2\Big(\frac{J(\bar x)}{\sqrt{\bar v_i^2 +\epsilon}}\Big)\sin(\kappa_i\omega t)\big) \\
&\quad
+\sqrt{2\beta_i\omega}\Big(
\sqrt{\varkappa_{i1}}\,\psi_{11}(J(\bar x))\cos(\varkappa_{i1}\omega t)
\\
&\quad\quad\quad\qquad +\sqrt{\varkappa_{i2}}\,\psi_{12}(J(\bar x))\cos(\varkappa_{i2}\omega t)
\Big),\\ 
\dot{\bar v}_i
&=
-\beta_i \bar v_i 
+\sqrt{2\beta_i\omega}\Big(
\sqrt{\varkappa_{i1}}\,\psi_{21}(J(\bar x))\sin(\varkappa_{i1}\omega t)\\
&\quad\quad\quad\qquad+\sqrt{\varkappa_{i2}}\,\psi_{22}(J(\bar x))\sin(\varkappa_{i2}\omega t)
\Big).
\end{aligned}
\end{equation}
}
Finally, the ES system from~\cite{DurrAuto,GZE18} has the form\par
{\footnotesize\begin{equation}
\label{ex1d}
\dot{\bar x}_i= \sqrt{2\alpha_i \omega\kappa_i }\Big(\phi_1({J(\bar x)})\cos(\kappa_i\omega t) +\phi_2({J(\bar x)})\sin(\kappa_i \omega t)\Big).
\end{equation}
}

\subsection{Single-variable cost function}

Consider the function $J:\mathbb{R}\to[0,1)$ given by
\begin{equation}
    \label{ex1cost}
    J(x)=\frac{1}{2}\big(1-e^{-x^2}\big).
\end{equation}
For $|x|\leq 1$, the function $J$ behaves like $x^2$, and all algorithms exhibit similar qualitative behavior (cf.~Fig.~\ref{fig_ex1}, left). However, outside this region the function becomes flat, since $J(x)\to 1$ as $|x|\to\infty$. For initial conditions in this region, one observes that RMSProp-like algorithms outperform the classical gradient-based extremum seeking approach (cf.~Fig.~\ref{fig_ex1}, right).

Note that, similarly to~\cite{GZE18}, one can choose control vector fields satisfying different properties, such as bounded update rates (based on the idea originally introduced in~\cite{Sch14}):
\begin{equation}
    \label{bounded}
    \begin{aligned}
     &\phi_1(z)=\psi_{2}(z)=\psi_{12}(z)=\psi_{21}(z)=\sin z,\\
     &\phi_2(z)=\psi_1(z)=\psi_{11}(z)= -\psi_{22}(z)=\cos z.
\end{aligned}
\end{equation}
The results of numerical simulations with these functions are presented in the top plots of Fig.~\ref{fig_ex1}. Furthermore, for positive definite cost functions, the classical asymptotic stability property can be achieved with the following functions (see also~\cite{SD17}):
\begin{equation}
    \label{vanish}
    \begin{aligned}
     &\phi_1(z)=\psi_{2}(z)=\psi_{12}(z)=\psi_{21}(z)=\sqrt z\sin\ln z,\\
     &\phi_2(z)=\psi_1(z)=\psi_{11}(z)= -\psi_{22}(z)=\sqrt z\cos\ln z.
\end{aligned}
\end{equation}
This property is visualized in the bottom plots of Fig.~\ref{fig_ex1}.
In all these cases, we put  
\begin{equation}
    \label{parameters1}
    \begin{aligned}
    &\kappa_1=1,\,\varkappa_{11}=3,\,\varkappa_{12}=8,\,\epsilon=10^{-6}, \,\omega=2\pi\times 10^2.\\
    \end{aligned}
\end{equation}

\subsection{Quadratic form with elliptic level sets}
To illustrate the applicability of the developed approaches to multi-variables cases, we consider the cost function
\begin{equation}
    \label{ex2cost}
    J(x)=x_1^2+\frac{x_2^2}{20}.
\end{equation}
As seen in the previous example, the solutions of the extremum seeking system~\eqref{ex1a} exhibit highly oscillatory behavior, even in the single-variable case. In this example, we focus on systems~\eqref{ex1b} and~\eqref{ex1c}. Note that 

For numerical simulations, we put $x(0)=1$ and
\begin{equation}
    \label{parameters2}
    \begin{aligned}
    \kappa_1{=}1,\,\varkappa_{11}{=}5&,\,\varkappa_{12}{=}7,\,\kappa_2{=}3,\,\varkappa_{21}{=}9,\,\varkappa_{22}{=}11,\\
     &\epsilon=10^{-6}, \,\omega=2\pi\times 10^2.\\
    \end{aligned}
\end{equation}
The time plots of the $x$-components of the solutions to systems~\eqref{ex1b} and~\eqref{ex1c}, together with that of system~\eqref{ex1d}, are shown in Fig.~\ref{fig_ex2}. The plots correspond to the control vector fields defined by~\eqref{bounded} (left) and~\eqref{vanish} (right).
\begin{figure*}[!ht]
    \begin{minipage}{0.5\linewidth}
    \end{minipage}\hfill
      \begin{minipage}{1\linewidth}
      \begin{center}
        \includegraphics[width=1\linewidth]{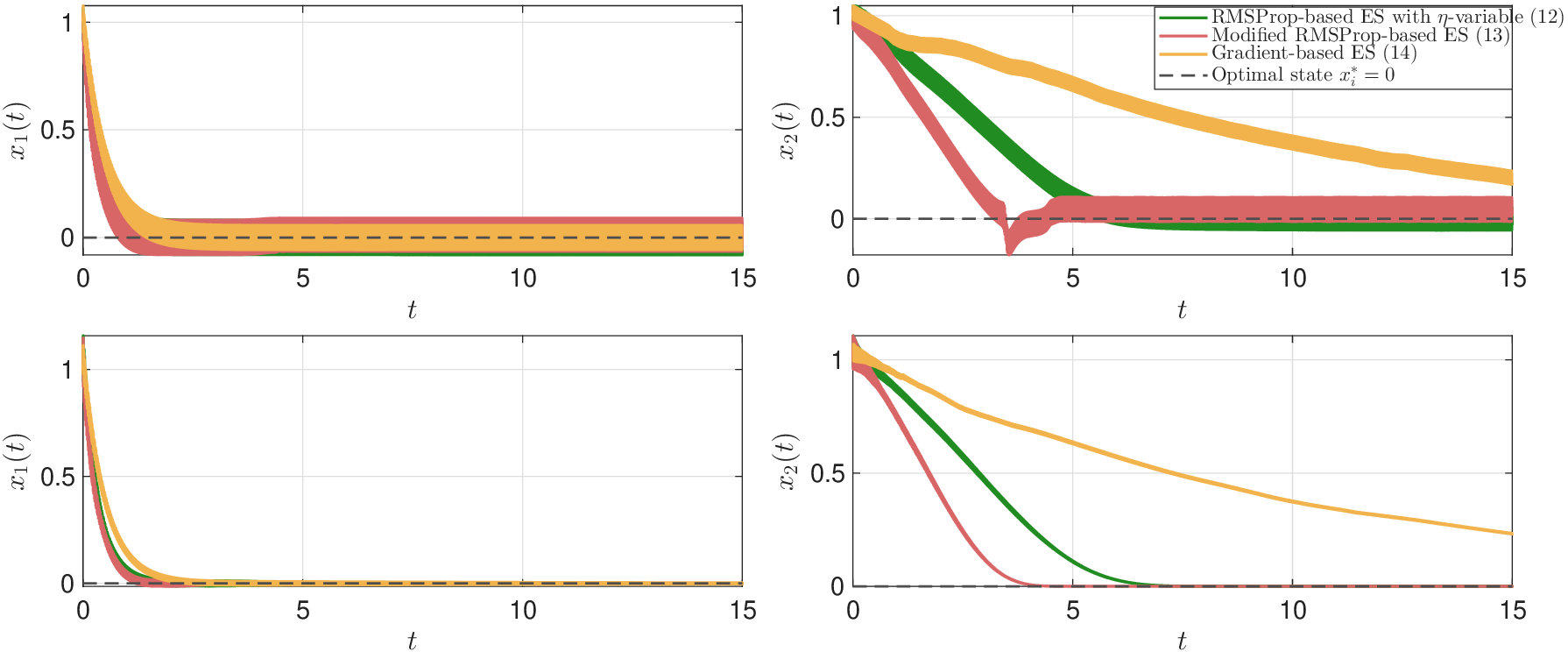}
        \end{center}
    \end{minipage}
  \caption{Time plots of the $x_1$ (left) and $x_2$ (right) components of the solutions of systems~\eqref{ex1a}--\eqref{ex1d} with the cost function~\eqref{ex2cost}, parameters~\eqref{parameters2}, and control vector fields defined by~\eqref{bounded} (left) and~\eqref{vanish} (right). The solutions are initialized at $x(0)=(1,1)^T$  with $v(0)=(0.5,0.5)^\top$, $w(0)=(0.5,0.5)^\top$.}
    \label{fig_ex2}
    \end{figure*}

\section{Conclusions}
\label{sec_con}
In this paper, we have introduced three novel extremum seeking algorithms based on the integration of RMSProp-like dynamics and Lie bracket approximation techniques. The presence of the squared gradient components represents a key challenge for Lie bracket approximation of RMSProp, since these terms cannot be directly approximated using standard tools based on the first order Lie brackets. The three proposed approaches address this difficulty in different ways and provide different trade-offs between approximation of the original RMSProp dynamics and the complexity of the extremum seeking implementation.

The scheme~\eqref{ES_2order_q} allows us  to approximate the trajectories of the classical RMSProp algorithm, however, it requires excitation of second-order Lie brackets in order to generate the squared gradient terms in the reference system. While this result and the corresponding stability analysis provide a nontrivial extension of higher-order Lie bracket approximation techniques~\cite{pokhrel2026higher,GE_CDC25}, the choice of excitation frequencies becomes challenging in the multivariable case due to the non-resonance requirements. This may lead to larger excitation frequencies and increased amplitudes of oscillations in the extremum seeking dynamics. 

In view of this, we have proposed two modifications of RMSProp-type algorithms that do not require second-order Lie brackets for their extremum seeking implementation.
The first modification~\eqref{ES_1order} avoids second-order Lie brackets at the cost of introducing an additional state variable to filter the squared gradient terms, and therefore increases the system dimension.
The second modification~\eqref{ES_1order_mod}  preserves the original system dimension but changes the RMSProp update rule. Unlike classical RMSProp, where the gradient is first squared and then filtered, system~\eqref{RMSProp_mod2} first filters the gradient and then squares this estimate in the denominator. In particular, for slowly varying $x$, the auxiliary variable $v_i$ in classical RMSProp tracks $(\nabla_i J(x))^2$, whereas in~\eqref{RMSProp_mod2} it tracks $\nabla_i J(x)$, so that $v_i^2\approx(\nabla_i J(x))^2$. Thus, both schemes provide approximately the same adaptive scaling of the $x_i$-dynamics in the quasi-steady state. Therefore, the modified scheme~\eqref{RMSProp_mod2} retains the essential RMSProp-type adaptive scaling mechanism while allowing the use of first-order Lie bracket approximations.
It is worth noting that the modified RMSProp algorithm~\eqref{RMSProp_mod2} is of independent interest, as it demonstrates improved convergence properties in simulations and does not require non-negativity of the $v_i$-components. We leave a detailed comparison with the classical RMSProp algorithm~\eqref{RMSProp} for future work.

\bibliographystyle{ieeetr}
\bibliography{biblio_ES}

\section*{Appendix. Proof of Theorem~\ref{thm_RMSProp}}
\begin{proof}
The proof follows the same conceptual framework as classical results in extremum seeking theory based on Lie bracket approximation techniques.

\emph{Step 0. Preliminary constructions. }
Assume  $\delta_x,\delta_v > 0$ are such that $\overline{B_{\delta_x}(x^*)}\subset{\rm int}\mathcal D$, and $\delta_v\in(0,\epsilon)$. 
We choose a $\delta'_{x}>\delta_x$ such that $\mathcal D':=  \overline{B_{\delta'_x}(x^*)}\subseteq \mathcal D$ and 
define $\mathcal L_c=\{x\in \mathcal D': J(x)-J^*\le c\}$. 
Let
$c_0=\sup\limits_{x\in\overline{B_{\delta_x}(x^*)}}(J(x)-J^*)>0$,
and  assume $\delta'_x$ is chosen sufficiently large to ensure  $\mathcal L_{c_0}\subset   {\rm int} \mathcal D'$. Then we have the following set inclusion:
\begin{equation}
    \label{sets}
    \overline{B_{\delta_x}(x^*)}\subseteq \mathcal L_{c_0}\subset   {\rm int} \mathcal D'\subseteq \mathcal D.
\end{equation}
 Obviously,  $\mathcal L_{c} \subseteq \mathcal L_{c_0}$ for each $c\le c_0$.

We also fix an $m_v\in (\delta_v,\epsilon)$ and introduce the following  notations:\\
$\mathcal S_0= \mathcal L_{c_0} \times \overline{B_{\delta_v}(0)}$,
$\mathcal S'=\mathcal D'\times [-m_v, M_v]^n$,\\
where $[-m_v, M_v]^n=\underbrace{[-m_v, M_v]\times\dots\times [-m_v, M_v]}_{n\text{ times}}$, $M_v>0$,
$M_\phi=\sup\limits_{\bar q\in \mathcal S'}\max\limits_{i=1,\dots,n,j=1,2}\Big|\phi_j\Big(\frac{J(\bar x)}{\sqrt{\bar v_i+\epsilon}}\Big)\Big|$,\\
$M_\psi=\sup\limits_{\bar x\in \mathcal D'}\max\limits_{j=1,2}|\psi_j( J(\bar x))|$,
$M_J=\sup_{\bar x\in \mathcal D'}|J(\bar x)|$,\\
$M_{1J}=\sup_{\bar x\in \mathcal D'}\|\nabla J(\bar x)\|,$
$M_{2J}=\sup_{\bar x\in \mathcal D'}\|\nabla^2J (\bar x)\|$.

\emph{Step 1. Well-definiteness in $\mathcal S'=\mathcal D'\times [-m_v, M_v]^n$}.

 As a first step, we  show that  there exists an $\omega_0 > 0$ such that, for all $\omega > \omega_0$, the solutions of system~\eqref{ES_2order_q} with the initial data $\bar q(0)=\bar q^0 \in \mathcal S_0$  remain in the compact set $\mathcal S'$ for  $t \in [0,T_\omega]$ with $T_\omega=2\pi\omega^{-1}$ and $M_v\in(\delta_v,\infty)$ to be specified later.

Using the integral representation of solutions of system~\eqref{ES_2order_q} with initial condition $\bar q^0\in \mathcal S_0$ for the $\bar x$-component, and the variation of constants formula for the $\bar v_i$-component, we conclude that for all $t\in[0,T_\omega]$,
$$
\begin{aligned}
&\|\bar x(t)-\bar x^0\|\le  4\pi\omega^{-1/2}M_\phi\Big({\sum_{i=1}^n\alpha_i\kappa_i}\Big)^{1/2}\\
&\qquad \qquad\qquad{ +} 2\sqrt 2\pi\omega^{-1/3}M_\psi\Big({\sum_{i=1}^n\sigma_i^2}\Big)^{1/2} ,\\
&|\bar v_i(t)- \bar v_i^0e^{-\beta_i t}|\le \omega^{-1/3}2\pi M_J\sigma_i.
\end{aligned}
$$
For simplicity,   assume  $\omega>1$. 
Then, for all  $t\in[0,T_\omega]$, 
\begin{equation}
    \label{apriori}
\begin{aligned}
&\|\bar x(t)-\bar x^0\|\le c_x \omega^{-1/3} ,\|\bar v_i(t)-\bar v_i^0e^{-\beta_i t}|\le c_v \omega^{-1/3},
\end{aligned} 
\end{equation}
where $c_x {=}4\pi M_\phi\Big({\sum_{i=1}^n\alpha_i\kappa_i}\Big)^{1/2}
{+} 2\sqrt 2\pi M_\psi\Big({\sum_{i=1}^n\sigma_i^2}\Big)^{1/2}$, $c_v =2\pi M_J\max_{i=1,\dots,n}\sigma_i$.

Denote  
$d_x={\rm dist}(\mathcal L_{c_0},\partial \mathcal D')>0$, and let
 $\omega_0{=}\max\Big\{1,\Big(\frac{c_x }{d_x}\Big)^3,\Big(\frac{c_v}{m_v-\delta_v}\Big)^3,\Big(\frac{c_v }{M_v-\delta_v}\Big)^3\Big\}$. Then, for  $\omega{>}\omega_0$,
\begin{equation}
    \label{bounds}
\begin{aligned}
&\|\bar x(t)-\bar x^0\|\le  c_x \omega^{-1/3}\le d_x,\\
&\bar v_i(t) \ge -\delta_v- c_v \omega^{-1/3}\ge -m_v,\\
&\bar v_i(t)\le  \delta_v +  c_v \omega^{-1/3} \le M_v,
\end{aligned} 
\end{equation}
which implies  $\bar q(t)\in\mathcal S'$ for all $t\in[0,T_\omega$ whenever $\bar q^0\in\mathcal S_0$.

\emph{Step 2. Estimation for $J(\bar x(T_\omega))$}.

The second step is to estimate the value  $J(x(T_\omega))$. For this purpose, we employ the Chen--Fliess series expansion. Similarly to~\cite[Proof of Theorem 1]{GZ18} but  taking into account the drift term $f_0$, we  represent the solutions of ~\eqref{ES_2order_q} as follows:
\begin{equation} \label{chen_q}
\begin{aligned}
     \bar q(T_\omega)=\bar q^0&+2\pi\omega^{-1}\big( f_0(\bar q^0)+\alpha_i\sum_{i=1}^n[f_{i1},f_{i2}](\bar q^0)\\
     &+\sum_{i=1}^n\beta_i [[g_{i1},g_{i2}],g_{i3}](\bar q^0)\big) +R_\omega, 
\end{aligned}
\end{equation}
where $R_\omega=(R_{\omega,x}^\top,R_{\omega,v}^\top)^\top$, $\|R_\omega\|\le C_R\omega^{-4/3}$ for all $\omega>\omega_0$, $\bar q^0\in \mathcal S'$, with some positive constant  $C_R$.  Straightforward computations show that, for all $i\in\{1,\dots,n\}$,
$$
\begin{aligned} &[f_{i1},f_{i2}](\bar q)=-\frac{ \nabla_iJ(\bar x)}{\sqrt{\bar v_i+\epsilon}}e_i,\\ 
&[[g_{i1},g_{i2}],g_{i3}](\bar q)=\nabla_iJ(\bar x)^2e_{i+n}, 
\end{aligned}
$$
therefore, the representation~\eqref{chen_q} can be written as
\begin{equation}
    \label{chen_2nd}
    \begin{aligned}
  \bar q(T_\omega)=\bar q^0-2&\pi\omega^{-1}\sum_{i=1}^n\Big(\frac{\alpha_i\nabla_iJ(\bar x^0)}{\sqrt{\bar v_i^0+\epsilon}}e_i\\
&+\beta_i (\bar v_i^0- \nabla_i J(\bar x^0)^2)e_{i+n}+R_\omega.      
    \end{aligned}
\end{equation}
Using  Taylor's formula, we write
$$
\begin{aligned}
   J(\bar x(T_\omega))=J&(\bar x^0)+\nabla J(\bar x^0)(\bar x(T_\omega)-\bar x^0)\\
   &+\frac{1}{2}(\bar x(T_\omega)-\bar x^0)^\top\nabla^2J(\theta)(\bar x(T_\omega)-\bar x^0),
\end{aligned}
$$
with some $\theta$ on the segment joining $ \bar x^0$ and $\bar x(T_\omega)$. With representation~\eqref{chen_q}, this yields
{\small $$
\begin{aligned}
J(\bar x(T_\omega))=& J(\bar x^0)-\nabla J(\bar x^0)\Big(2\pi \omega^{-1}\sum_{i=1}^n \frac{\alpha_i\nabla_iJ(\bar x^0)}{\sqrt{\bar v_i^0+\epsilon}}e_i-R_{\omega,x}\Big)\\
&+\frac{1}{2}\nabla^2 J(\theta)\Big(2\pi \omega^{-1}\sum_{i=1}^n \frac{\alpha_i\nabla_iJ(\bar x^0)}{\sqrt{\bar v_i^0+\epsilon}}e_i-R_{\omega,x}\Big)^2\\
\end{aligned}
$$}
{\small $$
\begin{aligned}
\le J(\bar x^0)-2\pi &\omega^{-1}\sum_{i=1}^n \frac{\alpha_i\nabla_iJ(\bar x^0)^2}{\sqrt{\bar v_i^0+\epsilon}}\Big(1-\frac{M_{2J}\alpha_i \omega^{-1}}{\sqrt{\bar v_i^0+\epsilon}} \Big)+R_J,
\end{aligned}
$$}
where $R_J{=}\|\nabla J(\bar x^0)R_{\omega,x}\|{+}\|\nabla^2J(\theta)\|R_{\omega,x}^2{\le} C_{R_J}\omega^{-{4/3}}$ with $C_{R_J}= M_{1J}C_R +M_{2J}C_R^2\omega_0^{-4/3}$, for all $\omega>\omega_0$.
Thus,
{\small $$
\begin{aligned}
J(\bar x(T_\omega))\le J(\bar x^0)&-2\pi \omega^{-1}\sum_{i=1}^n \frac{\alpha_i\nabla_iJ(\bar x^0)^2}{\sqrt{\bar v_i^0+\epsilon}}\Big(1-\frac{M_{2J}\alpha_i \omega^{-1}}{\sqrt{\bar v_i^0+\epsilon}} \Big)\\
&+C_{R_J}\omega^{-4/3}.    
\end{aligned}
$$}
In particular, for any $\bar v_i^0\in[ -\delta_v,M_v]$, $\tilde\lambda\in(0,1)$, and for any $\omega>\omega_1=\frac{M_{2J}}{(1-\tilde\lambda)\sqrt{\epsilon-\delta_v}}\max\limits_{i=1,\dots,n}\{\alpha_i\} $,
\begin{equation}
    \label{chen_J}
J(\bar x(T_\omega))\le J(\bar x^0)-\lambda\omega^{-1}\|\nabla J(\bar x^0)\|^2+C_{R_J}\omega^{-4/3}
\end{equation}
where $\lambda=\frac{2\pi  \tilde\lambda}{\sqrt{\epsilon+M_v}}\min\limits_{i=1,\dots,n}\{ \alpha_i\}$.

\emph{Step 3. Practical convergence of $\bar x$.}
In the third step, we aim to prove that for any $\rho>0$ there exists an $\omega_2>0$ and $t_f\ge 0$ such that, for all $\omega>\omega_2$, $t\ge t_f$, $\bar x(t)\in\mathcal L_{\rho}$.

 Assume  that  $\bar v_i(t)\in [-m_v,M_v]$ for all $t\ge 0$ (we will prove this later). 
For any given $\rho\in(0,c_0)$, define $\rho_1>0$ such that $J(x)-J^*\le \rho/2$ for all $x\in\{x\in \mathcal D':\|\nabla J(x)\|\le \rho_1\}$.
Consider two cases.

\textit{Case 1.} $\|\nabla J(\bar x^0)\|\ge \rho_1$. Then representation~\eqref{chen_J} implies that, for any $\hat \lambda\in(0,\lambda)$, $\omega>\omega_{2,1}=\Big(\frac{C_{R_J}}{(\lambda-\hat\lambda)\rho_1^2}\Big)^3$,
$$
\begin{aligned}
  J(\bar x(T_\omega))-J^*&\le J(\bar x^0)-J^*\\
  &\qquad\quad -\omega^{-1}\|\nabla J(\bar x^0)\|^2\Big(\lambda-\frac{C_{R_J}\omega^{-1/3}}{\rho_1^2}\Big)\\
  &\le J(\bar x^0)-J^*-\hat\lambda\omega^{-1}\rho_1^2\le c_0,  
\end{aligned}
$$
which means that   $\bar x(T_\omega)\in \mathcal L_{c_0}$ provided that $\bar x^0\in \mathcal L_{c_0}$ . 

\textit{Case 2.} $\|\nabla J(\bar x^0)\|< \rho_1$. Then representation~\eqref{chen_J}  and definition of $\rho_1$ imply  that, for any $\omega>\omega_{2,2}=\Big(\frac{2C_{R_J}}{\rho}\Big)^{3/4}$,
$$
\begin{aligned}
J(\bar x(T_\omega))-J^*\le &J(\bar x^0)-J^*+C_{R_J}\omega^{-4/3} \le \frac{\rho}{2}+\frac{\rho}{2}=\rho,
\end{aligned}
$$
and we may consider again cases 1) and 2) depending on the value of $\|\nabla J(\bar x(T_\omega))\|$.
Assuming $\bar v_i(t)\in [-m_v,M_v]$ for all $t\ge 0$, we may repeat all the previous steps and conclude that, after  some finite time $t_f\ge0$, $\bar x(t)$ enters the set $\mathcal L_\rho $ and stays there for all $t\ge t_f$.

\emph{Step 4. Well-definiteness of $\bar v_i(t)$ on $[0,+\infty)$}

On the fourth step, we prove that $\bar v_i(t)\in [-m_v,M_v]$ for all $t\ge 0$ and specify $M_v$. 

First, let us show that $\bar v_i(t)\ge -m_v$ for all $t\ge 0$ provided that $\bar v_i^0\in [-\delta_v,\delta_v]$.  
Let $\omega_{3,1}=2\pi \max\limits_{i=1,\dots,n}\beta_i $. Then for all $\omega>\omega_{3,1}$, 
representation~\eqref{chen_q} yields 
$$
\begin{aligned}
\bar v_i(T_\omega)&=\bar v_i^0(1-2\pi \beta_i \omega^{-1})+2\pi\beta_i \omega^{-1} \nabla_i J(\bar x^0)^2+R_{\omega,v_i}\\
&\ge \bar v_i^0(1-2\pi \beta_i \omega^{-1})- C_R\omega^{-4/3}\\
&\ge -\delta_v(1-2\pi \beta_i \omega^{-1})- C_R\omega^{-4/3}.
\end{aligned}
$$
Hence, taking $\omega>\omega_{3,2}=\max\limits_{i=1,\dots,n}\Big(\frac{C_R}{2\pi \delta_v\beta_i }\Big)^3$, we ensure
$
\bar v_i(T_\omega) \ge -\delta_v.
$
Together with estimate~\eqref{bounds}, this yields
$$
\bar v_i(t)\ge  -m_v\text{ for all }t\in[0,2T_\omega].
$$
For ensuring that $\bar v_i(t)\le M_v$, we use again representation~\eqref{chen_q}, which implies
$$
\bar v_i(T_\omega) \le (1-2\pi \beta_i \omega^{-1})\bar v_i^0+2\pi \beta_i \omega^{-1}J'(\bar x^0)^2+C_R\omega^{-4/3}.
$$
From the results of Step 3, this gives 
$$
\bar v_i(T_\omega) -\nu^2 \le (1-2\pi \beta_i \omega^{-1})(\bar v_i^0-\nu^2)+C_R\omega^{-4/3},
$$
where $\nu=\max\{\sup\limits_{x\in \mathcal L_{c_0}}\|\nabla J(x)\|,\rho\}$.
Iterating the above estimate, we obtain that, for any $k\in\mathbb N$,
$$
\bar v_i(kT_\omega) -\nu^2 \le \zeta^k(\bar v_i^0-\nu^2)+C_R\omega^{-4/3}\sum_{j=0}^{k-1}\zeta_i^j,
$$
with $\zeta_i=1-2\pi \beta_i \omega^{-1}\in(0,1)$.
The last term in the above estimate gives the geometric sum, which can be estimated as 
$$
\sum_{j=0}^{k-1}\zeta_i^j=\frac{1-\zeta_i^{k}}{1-\zeta_i}=\frac{1-(1-2\pi \beta_i \omega^{-1})^{k}}{2\pi \beta_i \omega^{-1}}\le \frac{\omega}{2\pi \beta_i }.
$$
Thus,
$$
\bar v_i(kT_\omega) -\nu^2 \le \zeta^k(\bar v_i^0-\nu^2)+\frac{C_R}{2\pi \beta_i }\omega^{-1/3},
$$
and
$$
\bar v_i(kT_\omega) \le \max\{\bar v_i^0,\nu^2\}+\frac{C_R}{2\pi \beta_i }\omega^{-1/3}.
$$
Taking into account~\eqref{apriori}, we conclude
$$
\bar v_i(t)\le \max\{\delta_v,\nu^2\}+\Big(\frac{C_R}{2\pi \beta_i }+c_v \Big)\omega^{-1/3}
$$
Given any $M_v\in\big(\max\{\delta_v,\nu^2\},\infty\big)$, let $\omega_{3,3}=\max\limits_{i=1,\dots,n}\Big(\frac{C_R/(2\pi \beta_i )+c_v }{M_v-\max\{\delta_v,\nu^2\}}\Big)^3$. Then for any $\omega>\omega_{3,3}$, $\bar v_i^0\in (-\delta_v,\delta_v)$, $i=1,\dots,n$, we have $\bar v_i(t)\le M_v$.

Thus, for any $\omega>\omega_3=\max\{\omega_{3,1},\omega_{3,2},\omega_{3,3}\}$,
$$
\bar v_i(t)\in [-m_v,M_v]\text{ for all }t\ge 0.
$$

\emph{Step 5. Final conclusions} 

Summarizing Steps~1--4 and noting that the parameter $\rho>0$ in Step~3 can be chosen arbitrarily small, we conclude that for all $\omega>\max\{\omega_0,\omega_1,\omega_2,\omega_3\}$, the point
$q^*=(x^*,0)^\top
$
is practically uniformly asymptotically stable for system~\eqref{ES_2order_q}.
Indeed, Step~3 shows that for any $\rho>0$ there exists a time $t_f\ge0$ such that
$\|\bar x(t)-x^*\|\le \rho
\quad \text{for all } t\ge t_f.$
Moreover, Step~4 implies that
$|\bar v_i(t)|
\le
 \hat \nu^2 + C\omega^{-1/3},
\quad t\ge t_f,$
where
$\hat \nu=\max\{\delta_v,\sup_{x\in\mathcal L_\rho}\|\nabla J(x)\|\}
$
and $C>0$ is a constant independent of $\omega$.

Since $\nabla J(x^*)=0$ and $\nabla J$ is continuous, we have $\nu\to0$ as $\rho\to0$. Hence,
for any $\varepsilon>0$ one can choose $\rho>0$ sufficiently small and $\omega$ sufficiently large so that
$$\nu^2 + C\omega^{-1/3} < \varepsilon/2
\quad\text{and}\quad
\rho < \varepsilon/2.$$
Then for all $t\ge t_f$,
$$\|\bar q(t)-q^*\|
\le
\|\bar x(t)-x^*\|+\|\bar v(t)\|
<
\varepsilon.$$
Furthermore, Steps~1 and~4 guarantee that solutions starting sufficiently close to $q^*$ remain in a bounded neighborhood of $q^*$ for all $t\ge0$, which establishes practical uniform stability. Consequently, the point $q^*$ is practically uniformly asymptotically stable for system~\eqref{ES_2order_q}.
 \end{proof}
\end{document}